\documentclass[12pt]{amsart}

\newtheorem{theorem}{Theorem}
\newtheorem{lemma}[theorem]{Lemma}
\newtheorem{proposition}[theorem]{Proposition}
\newtheorem{corollary}[theorem]{Corollary}
\newtheorem{remark}[theorem]{Remark}

\DeclareMathOperator{\aut}{Aut}

\DeclareMathOperator{\id}{id}
\newcommand{\NN}{\mathbb N}
\newcommand{\RR}{\mathbb R}
\newcommand{\UU}{\mathbb U}
\newcommand{\HH}{\mathbb H}

\newcommand{\CC}{\mathbb C}

\newcommand{\OO}{\mathcal O}
\newcommand{\EE}{\mathcal E}

\begin{document}

\title{A characterization of holomorphic mappings on a poly-plane}
\author{Armen Edigarian}
\address{Wydzia\l\ Matematyki i Informatyki, Uniwersytet Jagiello\'nski, \L o\-ja\-sie\-wi\-cza 6, 30-348 Krak\'ow, Poland}
\email{armen.edigarian@uj.edu.pl}
\thanks{The author was supported in part by the Polish National Science Centre (NCN) grant no. 2015/17/B/ST1/00996.}
\keywords{Holomorphic mappings on the upper half-plane, hyperbolic distance}
\subjclass[2010]{30F45}

\begin{abstract} 
We show that any function $f:\HH^n\to\HH$ with $f(z+c)=f(z)+c$, $z\in\HH^n$, for some $c>0$ has a property that any limit function of a family
$\{\frac{f(tz)}{t}\}_{t>0}$ when $t\to\infty$ is linear.
\end{abstract}

\maketitle

\section{Introduction}
Our paper is motivated by results of V. Markovic and D. Gekhtman \cite{Markovic, Gekhtman, Gekhtman-Markovic}.
Recently, Vladimir Markovic \cite{Markovic} solved a long-standing open problem in Teichm\"uller theory, by showing that
Carath\'eodory and Kobayashi metrics on Teichmuller spaces are generally different. One of the crucial step in the proof is the following rigidity result.
Here, $\HH$ denote the upper half-plane.
\begin{theorem}\label{thm:1a}
Let $f:\HH^n\to\HH$ be a holomorphic function satisfying the conditions:
\begin{enumerate}
\item $f(\lambda,\dots,\lambda)=\lambda$,
\item $\frac{\partial f}{\partial z_j}(\lambda,\dots,\lambda)=\alpha_j$, $j=1,\dots,n$, where $\alpha_j\in[0,1]$;
\item $\sum_{j=1}^n\alpha_j=1$;
\end{enumerate}
for any $\lambda\in\HH$. Assume that there exists a constant $c>0$ such that
\begin{equation}\label{eq:1a}
f(z_1+c,\dots,z_n+c)=f(z_1,\dots,z_n)+c\quad\text{ for any } z_1,\dots,z_n\in\HH.
\end{equation}
Then $f(z)=\sum_{j=1}^n \alpha_j z_j$.
\end{theorem}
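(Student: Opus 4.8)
The plan is to reduce the theorem to a statement about the error term $g:=f-L$, where $L(z)=\sum_{j=1}^n\alpha_jz_j$, then use a rescaling at infinity to recognise every ``tangent map'' of $f$ as linear, and finally use the cusp equivariance \eqref{eq:1a} once more to transport this linearity back to $f$ itself. First I would record the properties of $g=f-L$: from (1) and (3), $g$ vanishes on the diagonal $\Delta=\{(\lambda,\dots,\lambda):\lambda\in\HH\}$; from (2), all first partials of $g$ vanish on $\Delta$; from (1)--(3) with $\alpha_j\ge 0$ one has $\operatorname{Im}L(z)=\sum_j\alpha_j\operatorname{Im}z_j>0$, hence $\operatorname{Im}g(z)>-\operatorname{Im}L(z)$ on $\HH^n$; and from \eqref{eq:1a}, $g(z+c\mathbf 1)=g(z)$, where $\mathbf 1=(1,\dots,1)$. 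Thus it suffices to show that a holomorphic $g:\HH^n\to\CC$ which is $c\mathbf 1$-periodic, vanishes to second order along $\Delta$, and satisfies $\operatorname{Im}g>-\operatorname{Im}L$ must vanish identically.

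Next I would carry out the rescaling. Set $f_t(z)=\tfrac1t f(tz)$ for $t>0$; since $t$ is real and positive, $f_t:\HH^n\to\HH$ and $f_t(\lambda\mathbf 1)=\lambda$, so by Montel $\{f_t\}_{t\ge 1}$ is normal with every limit mapping $\HH^n$ into $\HH$. A direct check shows each $f_t$ again satisfies (1)--(3) and satisfies $f_t(z+\tfrac ct\mathbf 1)=f_t(z)+\tfrac ct$. If $g_\infty$ is a subsequential limit along $t_k\to\infty$, then $g_\infty$ inherits (1)--(3) (the derivative conditions via the Cauchy estimates), and inserting integers $m_k$ with $m_kc/t_k\to s$ into the last relation gives $g_\infty(z+s\mathbf 1)=g_\infty(z)+s$ for all $s\in\RR$. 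Differentiating in $s$ yields $\sum_j\partial g_\infty/\partial z_j\equiv 1$, so $g_\infty-\tfrac1n\sum_jz_j$ is annihilated by $\sum_j\partial/\partial z_j$, hence is constant along every complex line $z+\CC\mathbf 1$; since these lines meet $\HH^n$ in connected sets whose images fill $\CC^{n-1}$, the difference descends to an entire function $\tilde h$ on $\HH^n/\CC\mathbf 1\cong\CC^{n-1}$ (coordinates $w_j=z_j-z_n$). Rewriting $\operatorname{Im}g_\infty>0$ in these coordinates shows that $\operatorname{Im}\tilde h(w)$ is bounded below by a concave, piecewise-linear function of $\operatorname{Im}w$ of at most linear growth, i.e.\ $\operatorname{Im}\tilde h(w)\ge -C|w|$; a Harnack-inequality argument on balls of radius $R\to\infty$ then forces $\operatorname{Im}\tilde h$, hence $\tilde h$, to be affine — in fact linear, since (1) gives $\tilde h(0)=0$ — and (1)--(2) pin down the coefficients, so $g_\infty=L$. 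As every subsequential limit equals $L$, we obtain $f_t\to L$ locally uniformly, i.e.\ $g(tz)=o(t)$ locally uniformly on $\HH^n$ as $t\to\infty$.

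Finally I would promote this to $g\equiv 0$, using \eqref{eq:1a} a second time. By the $c\mathbf 1$-periodicity, $g$ descends to a holomorphic $\hat g$ on $\mathcal D=\{(w,\zeta)\in\CC^{n-1}\times(\DD\setminus\{0\}):\operatorname{Im}w_j>\tfrac{c}{2\pi}\log|\zeta|\}$ via $w_j=z_j-z_n$, $\zeta=e^{2\pi i z_n/c}$. For a fixed difference vector $w$, the slice $\lambda\mapsto f(\lambda+w_1,\dots,\lambda+w_{n-1},\lambda)$, after subtracting its linear part and translating, is a holomorphic self-map of a half-plane $\{\operatorname{Im}\lambda>M(w)\}$ commuting with $\lambda\mapsto\lambda+c$; the commutation relation forces its Denjoy--Wolff point to be the cusp $\infty$, and $g(tz)=o(t)$ identifies its angular derivative there as $1$. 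Feeding this through a Schwarz/Julia estimate for the induced self-map of punctured discs should show, on the one hand, that $\hat g$ extends holomorphically across $\{\zeta=0\}$, and on the other that the trace $A(w):=\hat g(w,0)$ again satisfies $\operatorname{Im}A(w)\ge -C|w|$; the Harnack argument then makes $A$ linear, the second-order vanishing along $\Delta$ makes $A\equiv 0$, and a final maximum-principle/normal-families argument, exhausting the cusp in each coordinate and using that $f$ genuinely lands in $\HH$, should upgrade $A\equiv 0$ to $g\equiv 0$.

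The hard part, I expect, is exactly this last step: passing from ``every rescaled limit of $f$ is linear'' to ``$f$ is linear''. The rescaling annihilates everything sublinear, so by itself it only controls $f$ modulo a $c\mathbf 1$-periodic error $g$ vanishing to second order along $\Delta$ with $\operatorname{Im}g>-\operatorname{Im}L$; the naive positivity estimate degenerates at the cusp, where $-\operatorname{Im}L\to-\infty$, so the needed rigidity must come from a Schwarz-type lemma at the parabolic fixed point, played against the fact that along $\Delta$ every estimate is an equality. Making this cusp rigidity precise — and in particular excluding the scenario in which $g$ vanishes only to first order at the cusp yet is not identically zero — is where the real difficulty lies, and it is the point at which one must use simultaneously that $f$ maps into $\HH$ (not merely into $\CC$) and the diagonal normalization (1)--(2).
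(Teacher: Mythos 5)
Your first two steps essentially reproduce the paper's own route: the rescaled family $f_t(z)=\frac1t f(tz)$ is normal and non-degenerate because it fixes the diagonal, every subsequential limit $h$ satisfies $h(z+s\mathbf 1)=h(z)+s$ for all $s\in\RR$ (the paper's Proposition, by the same approximation of $s$ by multiples of $c/t_j$), and any such $h$ is affine by descending to $\CC^{n-1}$ in the coordinates $w_j=z_j-z_n$ and applying a one-sided Liouville theorem to a harmonic function with linear growth on one side (the paper's Theorem~\ref{thm:5} via Lemma~\ref{lemma:5}, proved by exactly the Poisson-representation positivity trick you call a Harnack argument). The normalizations (1)--(3) pass to the limit, so every limit equals $L(z)=\sum_j\alpha_jz_j$ and $f_t\to L$ locally uniformly. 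Up to here the proposal is sound.

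The genuine gap is your third step, and you have named it yourself without closing it: everything after ``Finally I would promote this to $g\equiv0$'' is a plan rather than a proof (``should show'', ``should upgrade''), and it supplies no mechanism for excluding a nonzero $c\mathbf 1$-periodic error that is $o(t)$ along rays. The paper closes this gap with a short Julia-type inequality at the boundary point $\infty$ that you do not need the punctured-disc quotient or Denjoy--Wolff theory to run. Fix $z\in\HH^n$ and set $w=i\Im z$, so that $f(t_jw)/t_j\to a:=L(w)\in\HH$. Since $f$ decreases hyperbolic distance, $d_{\HH}(f(z),f(t_jw))\le d_{\HH^n}(z,t_jw)=\frac12\log t_j+o(1)$, while the elementary asymptotic expansion of $d_\HH$ gives $d_{\HH}(f(z),t_ja)=\frac12\log t_j+\frac12\log\frac{|a|^2}{\Im f(z)\,\Im a}+o(1)$ and $d_\HH(f(t_jw),t_ja)=d_\HH(f(t_jw)/t_j,a)\to0$. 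The triangle inequality then forces $\Im f(z)\ge \frac{|a|^2}{\Im a}\ge\Im a=\Im L(z)$ (the last equality because the $\alpha_j$ are real), i.e.\ the \emph{global} one-sided bound $\Im(f-L)\ge0$ on all of $\HH^n$ --- there is no degeneration at the cusp to worry about. Since $\Im(f-L)=0$ on the diagonal by hypothesis (1), the function $e^{i(f-L)}$ is bounded by $1$ and attains modulus $1$ at an interior point, so the maximum principle gives $f=L+\alpha$ with $\alpha\in\RR$, and (1) forces $\alpha=0$. This is precisely the ``Schwarz-type lemma at the parabolic fixed point'' you were looking for; I would replace your entire third paragraph with it.
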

Theorem~\ref{thm:1a} for n=2 is proved in \cite{Markovic} and for any $n\ge2$ in \cite{Gekhtman}. If the proof for $n=2$ in \cite{Markovic}
is complex analytic, the proof for $n\ge2$ is based on some results from ergodic theory. 
The aim of our paper is to analyze the properties of holomorphic mappings $f:\HH^n\to\HH$ satisfying \eqref{eq:1a} for some constant $c>0$. 
In particular, using the ideas from \cite{Markovic}, we show that any limit holomorphic function of the family $\{\frac{1}{t}f(tz)\}_{t>0}$ is linear, i.e., any holomorphic function $h:\HH^n\to\HH$ such that
$\lim_{j\to\infty}\frac{f(t_jz)}{t_j}=h(z)$ for a sequence $t_j\to\infty$ is linear. In consequence, we have the following generalization of Theorem~\ref{thm:1a} for any $n\ge2$ by purely analytic proof.
\begin{theorem}\label{thm:1} Let $f:\HH^n\to\HH$ be a holomorphic mapping satisfying \eqref{eq:1a} for some $c>0$.
Assume that there exists a sequence $t_j\to\infty$ and a holomorphic
mapping $h:\HH^n\to\HH$ such that $\frac1{t_j}f(t_jz)\to h(z)$ for any $z\in\HH^n$.
Then $h$ is linear. 

Moreover, if there exists a point $w\in\HH^n$ such that $\Im(f(w)-h(w))=0$ then there is a constant $\alpha\in\RR$ with $f=h+\alpha$.
\end{theorem}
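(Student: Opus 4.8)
The plan is to treat the two assertions in turn: first reduce the linearity of $h$ to a statement about entire maps, then obtain the rigidity from a minimum‑principle argument. Write $\mathbf 1=(1,\dots,1)$. I would begin by upgrading the pointwise convergence $\frac1{t_j}f(t_jz)\to h(z)$ to local uniform convergence: the maps $z\mapsto\frac1tf(tz)$ form a normal family (they take values in $\HH$, which is biholomorphic to a bounded domain), and every locally uniform limit of a subsequence must agree with the pointwise limit $h$, which lies in $\HH$; hence the full sequence converges locally uniformly to $h$. Iterating \eqref{eq:1a} gives $f(z+kc\mathbf 1)=f(z)+kc$ for all integers $k$. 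For real $s$, writing $st_j=k_jc+r_j$ with $r_j\in[0,c)$, one has $\frac1{t_j}f\bigl(t_j(z+s\mathbf 1)\bigr)=g_j\bigl(z+\frac{r_j}{t_j}\mathbf 1\bigr)+\frac{k_jc}{t_j}$ with $g_j:=\frac1{t_j}f(t_j\cdot)$; since $g_j\to h$ locally uniformly, $\frac{r_j}{t_j}\to0$ and $\frac{k_jc}{t_j}\to s$, this yields $h(z+s\mathbf 1)=h(z)+s$ for $s\in\RR$, and then for every $s$ with $\Im s\ge0$ by analytic continuation in $s$.

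The relation $h(z+s\mathbf 1)=h(z)+s$ shows that $h(z)-z_n$ is invariant under $z\mapsto z+s\mathbf 1$ for real $s$, hence (by analyticity) for all admissible complex $s$, so it factors as $\psi\circ\pi$ with $\pi(z)=(z_1-z_n,\dots,z_{n-1}-z_n)$; since $\pi(\HH^n)=\CC^{n-1}$ and $\pi$ has holomorphic local sections, $\psi$ is entire on $\CC^{n-1}$. Taking $\inf\Im z_n$ over a fibre of $\pi$ gives $\Im\psi(w)\ge\min(0,\Im w_1,\dots,\Im w_{n-1})$, so $\psi$ maps $\HH^{n-1}$ into $\overline{\HH}$, hence into $\HH$ unless it is constant (in which case $h$ is already linear). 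It then remains to show: an entire map $\psi\colon\CC^m\to\CC$ with $\psi(\HH^m)\subseteq\HH$ is affine with non‑negative coefficients. This I would prove by induction on $m$. The case $m=1$ comes from the Herglotz–Nevanlinna representation of a holomorphic self‑map of $\HH$: entireness forces the representing measure to be a multiple of Lebesgue measure, which contributes only a purely imaginary constant, so $\psi$ is affine. For the inductive step, freezing $w_2,\dots,w_m\in\HH$ and applying the case $m=1$ gives $\psi=A(w_2,\dots,w_m)\,w_1+B(w_2,\dots,w_m)$ with $A\ge0$ on $\HH^{m-1}$, hence $A$ constant (a holomorphic function real on an open set is constant), while $B$ maps $\HH^{m-1}$ into $\overline{\HH}$ and is handled by induction. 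Substituting back yields $h(z)=\sum_j\alpha_jz_j+\beta$ with $\alpha_j\ge0$, $\sum_j\alpha_j=1$ (since $h$ inherits \eqref{eq:1a}), and $\Im\beta\ge0$; that is, $h$ is linear.

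For the rigidity statement, put $v:=\Im(f-h)$, a pluriharmonic function on $\HH^n$ which, since $f-h$ is invariant under $z\mapsto z+c\mathbf 1$, is $c$‑periodic in the $\mathbf 1$‑direction. The key step is to prove $v\ge0$ on $\HH^n$. The model is the one‑variable fact that a positive harmonic function on a punctured disc splits as a non‑negative multiple of $\log\frac1{|\zeta|}$ plus a positive harmonic function on the disc; pushing $\Im f$ down to the quotient of $\HH^n$ by $z\mapsto z+c\mathbf 1$, one wants to split the positive pluriharmonic function $\Im f$ into a leading homogeneous part and a non‑negative pluriharmonic remainder, then use $\frac1{t_j}f(t_jz)\to h(z)$ to identify the homogeneous part with $\Im h$ (this also forces $\Im\beta=0$), so that the remainder is exactly $v$. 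Granting $v\ge0$: $v$ is pluriharmonic on the connected domain $\HH^n$ and vanishes at the interior point $w$, so the minimum principle gives $v\equiv0$; then $f-h$ is holomorphic with identically vanishing imaginary part, hence equals a real constant $\alpha$, and $f=h+\alpha$.

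The main obstacle is precisely this inequality $\Im f\ge\Im h$ on $\HH^n$ — equivalently, the structure theory needed to isolate the homogeneous part of a positive, diagonally periodic pluriharmonic function on $\HH^n$ and to control its growth via the rescaling limit (which, in one variable, is the classical decomposition of positive harmonic functions on an annulus). Once that is in hand, the passage from $v\ge0$ and $v(w)=0$ to $f=h+\alpha$ is routine, as is the one‑variable input that entire Pick functions are affine; only the several‑variable packaging of these facts requires care.
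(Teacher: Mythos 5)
Your reduction of the linearity statement runs into a false lemma. After factoring $h(z)-z_n=\psi(z_1-z_n,\dots,z_{n-1}-z_n)$ you correctly derive the \emph{global} bound $\Im\psi(w)\ge\min(0,\Im w_1,\dots,\Im w_{n-1})$ for all $w\in\CC^{n-1}$, but you then discard it and keep only the weaker statement that $\psi$ maps $\HH^{n-1}$ into $\overline\HH$, reducing everything to the claim that an entire map with $\psi(\HH^m)\subseteq\HH$ is affine. That claim is false already for $m=1$: the entire function $\psi(z)=z+i+\tfrac12 e^{iz}$ satisfies $\Im\psi(x+iy)=y+1+\tfrac12e^{-y}\sin x\ge y+\tfrac12>0$ on $\HH$, so it is an entire Pick function that is not affine; correspondingly, its Herglotz measure is $\tfrac1\pi(1+\tfrac12\sin t)\,dt$, so entireness does \emph{not} force the representing measure to be a multiple of Lebesgue measure. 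What actually saves the argument is precisely the global estimate you proved and then abandoned: $\Re(i\psi)=-\Im\psi\le\max_j|w_j|$ on all of $\CC^{n-1}$, and a Borel--Carath\'eodory-type argument (the paper's Lemma~\ref{lemma:5}: differentiate the Poisson representation of $\Re$ twice and let $R\to\infty$) then shows $\psi$ is affine. This is exactly how the paper proves Theorem~\ref{thm:5}; your route needs to be repaired by reinstating the global bound.

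The second gap is that the key inequality $\Im f\ge\Im h$ is not proved: you name it as ``the main obstacle'' and only sketch a programme (splitting a positive, diagonally periodic pluriharmonic function on the quotient into a homogeneous part plus a non-negative remainder) without carrying it out; in several variables this decomposition is not routine and is where the real work lies. The paper gets the inequality by an elementary hyperbolic-distance comparison instead: for $z\in\HH^n$ put $w=i\Im z$ and $a=h(w)$ (so $\Im a=\Im h(z)$ since $h$ is linear with real coefficients), and use
$$
d_{\HH}(f(z),t_ja)\le d_{\HH^n}(z,t_jw)+d_{\HH}\Big(\tfrac{f(t_jw)}{t_j},a\Big),
$$
together with the asymptotics $d_{\HH}(b,t_ja)=\tfrac12\log t_j+\tfrac12\log\tfrac{|a|^2}{\Im b\,\Im a}+o(1)$ and $d_{\HH^n}(z,t_jw)=\tfrac12\log t_j+o(1)$, to conclude $\Im f(z)\ge\tfrac{|a|^2}{\Im a}\ge\Im h(z)$. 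Once that inequality is available, your concluding step (a non-negative pluriharmonic function vanishing at an interior point is identically zero, hence $f-h$ is a real constant) is correct and is equivalent to the paper's maximum-principle argument for $e^{i(f-h)}$.
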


\section{Proofs}
Let us start with the following simple remark (see discussion on page 19 in  \cite{Markovic}).
\begin{proposition}
Let $f:\HH^n\to\HH$ be a holomorphic function and let $c>0$ be a constant so that
$$
f(z+c)=f(z)+c\quad\text{ for any }z\in\HH^n.
$$
If $\frac{1}{t_j}f(t_jz)\to h(z)$, $z\in\HH^n$, when $j\to\infty$, where $h:\HH^n\to\HH$ is a holomorphic mapping then
\begin{equation}\label{eq:21}
h(z+s)=h(z)+s\quad\text{ for any } z\in\HH^n\text{ and any }s\in\RR.
\end{equation}
\end{proposition}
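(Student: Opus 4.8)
The plan is to use the functional equation $f(z+c)=f(z)+c$ --- first extending it to all integer multiples of $c$ --- and then to approximate an arbitrary real translation $t_js$ by an integer multiple of $c$ with remainder confined to $[0,c)$, so that after dividing by $t_j$ the remainder becomes negligible. First I would note that applying the hypothesis at $z-c$ gives $f(z-c)=f(z)-c$, and hence by iteration $f(z+mc)=f(z)+mc$ for every $m\in\mathbb Z$ and every $z\in\HH^n$ (recall $z\pm c\in\HH^n$ since the shift is real).

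Now fix $s\in\RR$ and $z\in\HH^n$; then $z+s\in\HH^n$, so the hypothesis applied at $z+s$ gives $\tfrac1{t_j}f(t_j(z+s))\to h(z+s)$. Write $t_js=m_jc+r_j$ with $m_j=\lfloor t_js/c\rfloor\in\mathbb Z$ and $r_j\in[0,c)$. Using the extended functional equation,
\[
\frac1{t_j}f\bigl(t_j(z+s)\bigr)=\frac1{t_j}f\bigl(t_jz+r_j+m_jc\bigr)=\frac1{t_j}f\bigl(t_jz+r_j\bigr)+\frac{m_jc}{t_j},
\]
where $\tfrac{m_jc}{t_j}=s-\tfrac{r_j}{t_j}\to s$ because $0\le r_j<c$ and $t_j\to\infty$. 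Thus the statement reduces to showing that $\tfrac1{t_j}f(t_jz+r_j)\to h(z)$.

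For this, set $g_j(w):=\tfrac1{t_j}f(t_jw)$, so that $\tfrac1{t_j}f(t_jz+r_j)=g_j(z+\varepsilon_j)$ with $\varepsilon_j:=r_j/t_j\to 0$ (and $z+\varepsilon_j\in\HH^n$). The one point needing care --- the main obstacle --- is that the \emph{pointwise} convergence $g_j\to h$ can be upgraded to \emph{locally uniform} convergence: each $g_j$ maps $\HH^n$ into $\HH$, hence, after composing with a Cayley transform, into the unit disc, so $\{g_j\}$ is a normal family by Montel's theorem and a pointwise limit is automatically a locally uniform one. Granting this, fix a compact polydisc $K\subset\HH^n$ containing $z$ in its interior and containing $z+\varepsilon_j$ for all large $j$; then
\[
\bigl|g_j(z+\varepsilon_j)-h(z)\bigr|\le\bigl|g_j(z+\varepsilon_j)-h(z+\varepsilon_j)\bigr|+\bigl|h(z+\varepsilon_j)-h(z)\bigr|\longrightarrow 0,
\]
the first term by uniform convergence of $g_j$ to $h$ on $K$, the second by continuity of $h$. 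Combining with the previous display gives $\tfrac1{t_j}f(t_j(z+s))\to h(z)+s$, and comparing with $h(z+s)$ yields \eqref{eq:21}.
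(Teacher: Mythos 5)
Your proof is correct and follows essentially the same route as the paper: write $t_js=m_jc+r_j$ with $r_j\in[0,c)$, apply the iterated functional equation, and use locally uniform convergence (via normality) to absorb the $r_j/t_j$ perturbation. You are in fact slightly more careful than the paper, which only treats $s>0$ and asserts the locally uniform convergence without invoking Montel explicitly.
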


\begin{proof} Note that  $\frac{1}{t_j}f(t_jz)\to h(z)$ locally uniformly in $\HH^n$.
Fix $s>0$. For any $j\ge 1$ we have $t_j s=\ell_j c+q_j$, where $\ell_j\in\NN$ and $0\le q_j<c$.
Then 
$$
h(z+s)=\lim_{j\to\infty}\frac{1}{t_j}f(t_jz+t_js)=
\lim_{j\to\infty}\frac{1}{t_j}\left(f\Big(t_j(z+\frac{q_j}{t_j})\Big)+\ell_j c\right)=h(z)+s.
$$
\end{proof}

In \cite{Markovic} (see also \cite{Gekhtman}) it is proved.
\begin{theorem}\label{thm:5} Let $h:\HH^n\to\HH$ be a holomorphic mapping such that
$$
h(z+s)=h(z)+s\quad\text{ for any }z\in\HH^n\text{ and any }s\in\RR.
$$
Then there exist $\alpha_0\in\CC$ and $\alpha_1,\dots,\alpha_n\in\RR$ such that $\sum_{k=1}^n\alpha_k=1$ and
$$
h(\lambda_1,\dots,\lambda_n)=\alpha_0+\sum_{j}\alpha_j\lambda_j,
$$
where $\Im\alpha_0\ge0$, $\alpha_j\ge0$ and $\sum_{j=1}^n\alpha_j=1$.
\end{theorem}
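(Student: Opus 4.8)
The plan is to exploit the translation identity to write $h$ in terms of the coordinate differences, to observe that this turns $h$ into an \emph{entire} function of subexponential growth, to conclude linearity, and finally to extract the sign conditions from the positivity of $\Im h$.

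First I would set $\phi(z):=h(z)-z_n$; the identity $h(z+s)=h(z)+s$ becomes $\phi(z+s)=\phi(z)$ for all real $s$, and differentiating in $s$ gives $\sum_{j=1}^n\partial\phi/\partial z_j\equiv0$ on $\HH^n$. For fixed $z_0\in\HH^n$ the function $\zeta\mapsto\phi\big(z_0+\zeta(1,\dots,1)\big)$ is holomorphic on the half-plane $\{\zeta:\ z_0+\zeta(1,\dots,1)\in\HH^n\}$ and has vanishing derivative there, hence is constant; translating the last coordinate up to $iN$ for large $N$ then shows that $\phi(z)$ depends only on $w:=(z_1-z_n,\dots,z_{n-1}-z_n)$. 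Since $w$ runs over all of $\CC^{n-1}$ as $z$ runs over $\HH^n$, this produces an entire $g:\CC^{n-1}\to\CC$ with
\[
h(z)=z_n+g(z_1-z_n,\dots,z_{n-1}-z_n).
\]
Writing $y=\Im z_n$, the condition $\Im h>0$ says $y+\Im g(w)>0$ for every $y>\max(0,-\Im w_1,\dots,-\Im w_{n-1})$, and letting $y$ decrease to that threshold yields the pointwise estimate $\Im g(w)\ge\min(0,\Im w_1,\dots,\Im w_{n-1})$ on all of $\CC^{n-1}$; in particular $\Re\big(ig(w)\big)=-\Im g(w)\le|w_1|+\dots+|w_{n-1}|$.

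Next I would show that $g$ is affine. The entire function $ig$ has real part $O(|w|)$, so restricting it to an arbitrary complex line $\zeta\mapsto ig(\zeta v)$ and applying the Borel--Carath\'eodory inequality with radii $R$ and $R/2$ shows that this restriction has modulus $O(|\zeta|)$, hence is affine in $\zeta$ by Cauchy's estimates; since the line is arbitrary, $g$ is affine, say $g(w)=\alpha_0+\sum_{j=1}^{n-1}\beta_j w_j$. Substituting back gives
\[
h(\lambda)=\alpha_0+\sum_{j=1}^{n-1}\beta_j\lambda_j+\Big(1-\sum_{j=1}^{n-1}\beta_j\Big)\lambda_n,
\]
i.e. the asserted linear form with $\alpha_j=\beta_j$ for $j<n$, $\alpha_n=1-\sum_{j<n}\beta_j$, and $\sum_{j=1}^n\alpha_j=1$ automatically. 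Finally, writing $\lambda_j=x_j+iy_j$ we have $\Im h=\Im\alpha_0+\sum_j(\Re\alpha_j\,y_j+\Im\alpha_j\,x_j)$: keeping the $y_j$ fixed and letting $x_j\to\pm\infty$ forces $\Im\alpha_j=0$, so each $\alpha_j$ is real; then $\Im\alpha_0+\sum_j\alpha_jy_j>0$ for all $y_j>0$, so letting all $y_j\to0^+$ gives $\Im\alpha_0\ge0$ and sending one $y_j\to+\infty$ with the others fixed gives $\alpha_j\ge0$, whence $\alpha_j\in[0,1]$ since the $\alpha_j$ sum to $1$.

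The main obstacle is the first step. Translation invariance alone is very far from forcing linearity — on $\HH^n$ the hypothesis is satisfied by $z_n+g(z_1-z_n,\dots,z_{n-1}-z_n)$ for \emph{any} holomorphic $g$ on $\CC^{n-1}$ — so the real content is that this invariance promotes $\phi$ to an entire function on $\CC^{n-1}$, on which the target constraint $\Im h>0$ degenerates into a one-sided linear growth bound. Once that mechanism is identified, the Borel--Carath\'eodory step and the boundary analysis of $\Im h$ are routine.
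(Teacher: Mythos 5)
Your proof is correct and follows essentially the same route as the paper: the reduction $h(z)=z_n+g(z_1-z_n,\dots,z_{n-1}-z_n)$ with $g$ entire, the one-sided linear bound on $\Re(ig)$ extracted from $\Im h>0$, and the conclusion that an entire function with linearly bounded real part is affine. The only cosmetic difference is that you establish that last fact via Borel--Carath\'eodory on complex lines, whereas the paper's Lemma~\ref{lemma:5} does it with a Poisson-integral estimate on the second derivative; these are interchangeable.
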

Frist recall the following version of a well-known result (see e.g. Proposition B.1 in \cite{Markovic}, Lemma 6.1 in \cite{Gekhtman}). For the sake of completeness, we give a proof.
\begin{lemma}\label{lemma:5}
Let $f:\CC^n\to\CC$ be a holomorphic mapping such that for some $A,B>0$ we have
$$
\Re(f(z))\le A\max\{|z_1|,\dots,|z_n|\}+B,\quad\text{ for any } z\in\CC^n.
$$
Then there exist $\alpha_0,\alpha_1,\dots,\alpha_n\in\CC$ such that $f(z)=\alpha_0+\sum_{j=1}^n \alpha_j z_j$.
\end{lemma}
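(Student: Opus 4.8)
The plan is to reduce the statement to the classical one-variable fact that an entire function with $\Re(g(\zeta)) \le a|\zeta| + b$ for all $\zeta \in \CC$ must be affine, namely $g(\zeta) = \alpha_0 + \alpha_1\zeta$. I would first establish this one-variable case, since it is the engine of the whole argument: given such $g$, expand $g(\zeta) = \sum_{k \ge 0} c_k \zeta^k$, and use the standard Borel--Carathéodory-type coefficient estimate. Concretely, for $k \ge 1$ one has $c_k = \frac{1}{\pi r^k}\int_0^{2\pi} \Re\big(g(re^{i\theta})\big) e^{-ik\theta}\, d\theta$ (the integral of $\Re(g)$ against $e^{-ik\theta}$ picks out $\frac12 c_k r^k$, using that the conjugate terms integrate to zero); replacing $\Re(g)$ by $\Re(g) - (ar+b) \le 0$ only changes the $k=0$ Fourier mode, so $|c_k| \le \frac{1}{\pi r^k}\int_0^{2\pi} \big(ar + b - \Re(g(re^{i\theta}))\big)\, d\theta = \frac{2(ar+b) - 2\Re(c_0)}{r^k}$. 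Letting $r \to \infty$ kills $c_k$ for $k \ge 2$, leaving $g$ affine.

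Next I would pass from one variable to $n$ variables by a slicing argument. Fix $z^0 \in \CC^n$ and a direction $v \in \CC^n$, and consider the entire function $g(\zeta) = f(z^0 + \zeta v)$ on $\CC$. The hypothesis gives $\Re(g(\zeta)) \le A\max_j |z^0_j + \zeta v_j| + B \le A|\zeta|\max_j|v_j| + (A\max_j|z^0_j| + B)$, which is exactly the one-variable growth condition. Hence every complex line restriction of $f$ is affine. In particular each partial derivative $\partial f/\partial z_j$ is constant along every complex line through the origin in the variable $\zeta$... more cleanly: applying the affineness of slices through $0$ in the coordinate directions shows $\partial^2 f/\partial z_j^2 \equiv 0$, and applying it to slices in directions $e_j + e_k$ shows the mixed second derivatives vanish as well, so $f$ is a polynomial of degree $\le 1$, i.e. $f(z) = \alpha_0 + \sum_j \alpha_j z_j$. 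Alternatively, and perhaps most economically, restrict to the line $\zeta \mapsto f(\zeta e_j + w)$ with the other coordinates frozen to conclude $f$ is affine in each variable separately, then a standard Hartogs/polynomial-degree bookkeeping gives global affineness.

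The one genuinely delicate point is the coefficient estimate in the one-variable step: one must be careful that the bound is on $\Re(f)$ only (not $|f|$), so the naive Cauchy estimate does not apply directly, and one must exploit the sign of $ar + b - \Re(g)$ together with the vanishing of $\int_0^{2\pi} e^{-ik\theta}\, d\theta$ for $k \ne 0$ to get a one-sided bound that is nonetheless strong enough. Everything after that — the slicing, and assembling separate affineness in each variable into joint affineness — is routine. I would present the one-variable lemma as the bulk of the proof and dispatch the reduction in a couple of lines.
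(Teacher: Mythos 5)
Your proposal is correct and is essentially the same argument as the paper's: both hinge on the Borel--Carath\'eodory trick of converting the one-sided bound on $\Re f$ into an $L^1$ bound on circles via the mean-value property (your $\int(ar+b-\Re g)\,d\theta=2\pi(ar+b)-2\pi\Re c_0$ is the paper's $|x|=2x_+-x$ step), the only cosmetic difference being that you estimate Taylor coefficients through Fourier integrals of $\Re g$ while the paper differentiates the Schwarz--Poisson representation twice to show $f''=0$. If anything, you are more careful than the paper on the reduction to $n=1$, which the paper dispatches with a bare ``without loss of generality'' while you spell out the polarization step ($e_j$ and $e_j+e_k$ directions) needed to pass from affineness on every complex line to joint affineness.
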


\begin{proof} Without loss of generality, we may assume that $n=1$.
The proof follows from the ideas from Nevannlina theory (see e.g. \cite{Ru}).
Fix $R>0$. For any $z\in\CC$ with $|z|<R$ we have
$$
\Re(f(z))=\frac{1}{2\pi}\int_{0}^{2\pi} \Re(f(Re^{i\theta}))\Re\left(\frac{Re^{i\theta}+z}{Re^{i\theta}-z}\right)d\theta.
$$
And, therefore,
\begin{align*}
\Re(f'(z))=\frac{1}{2\pi}\int_{0}^{2\pi} \Re(f(Re^{i\theta}))\frac{2R e^{i\theta}}{(Re^{i\theta}-z)^2}d\theta,\\
\Re(f''(z))=\frac{1}{2\pi}\int_{0}^{2\pi} \Re(f(Re^{i\theta}))\frac{-4Re^{i\theta}}{(Re^{i\theta}-z)^3}d\theta.
\end{align*}
Using the equality $|x|=2x_{+}-x$ we get
\begin{multline}
|\Re(f''(z))|\le\frac{1}{2\pi}\cdot\frac{4R}{(R-|z|)^3}\cdot\int_{0}^{2\pi} |\Re(f(Re^{i\theta}))|d\theta=\\
\frac{1}{2\pi}\cdot\frac{4R}{(R-|z|)^3}\left(2\int_{0}^{2\pi} (\Re(f(Re^{i\theta})))_{+}d\theta-2\pi\Re f(0)\right).
\end{multline}
Using the inequality $(\Re(f(Re^{i\theta}))_+\le AR+B$ and taking $R\to\infty$ we get $\Re (f'')=0$.
\end{proof}

\begin{proof}[Proof of Theorem~\ref{thm:5}] Fix $N\in\NN$ and put 
$$
H_N(z_1,\dots,z_{n-1})=f(z_1+\lambda,\dots,z_{n-1}+\lambda,\lambda)-\lambda
$$
where $\Im z_j>-N$ and $\Im\lambda>N$. Note that the right hand side does not depend on $\lambda$ and on  $N$. Hence,
$H=\lim_{N\to\infty} H_N\in\OO(\CC^{n-1})$ and 
$$
f(z_1,\dots,z_n)=z_n+H(z_1-z_n,\dots,z_{n-1}-z_n).
$$
We know that $f:\HH^n\to\HH$, hence 
\begin{equation}\label{eq:6}
\Im (z_n+H(z_1-z_n,\dots,z_{n-1}-z_n))>0
\end{equation}
when $\Im z_1,\dots,\Im z_n>0$.
Let us show that for any $w\in\CC^{n-1}$ we have 
\begin{equation}\label{eq:7}
\Im(-H(w))\le 2\max\{|w_1|,\dots,|w_{n-1}|\}.
\end{equation}
Indeed, in the inequality \eqref{eq:6} take $z_1=\dots=z_n\in\HH$. Then $\Im H(0)\ge0$.
If $w\in\CC^{n-1}$ is such that $w\not=0$ then put
$$
z_n=2i\max\{|w_1|,\dots,|w_{n-1}|\}
$$
and $z_j=z_n+w_j$, $j=1,\dots,n-1$. Note that $z_1,\dots,z_n\in\HH$. From the inequality \eqref{eq:6} we get the inequality \eqref{eq:7}.
From Lemma~\ref{lemma:5} we have $H(w)=a_0+\sum_{j=1}^{n-1}a_jw_j$. And, therefore,
$$
f(\lambda_1,\dots,\lambda_n)=\alpha_0+\sum_{j}\alpha_j\lambda_j,
$$
where $\alpha_j\ge0$ and $\sum_{j=1}^n\alpha_j=1$.
\end{proof}

For a convex domain $\Omega\subset\CC^n$ we denote by $d_{\Omega}$ its hyperbolic distance. Recall that
$$
d_{\HH}(a,b)=\frac12\log\frac{|a-\bar b|+|a-b|}{|a-\bar b|-|a-b|}=\log
\frac{|a-\bar b|+|a-b|}{2\sqrt{\Im a\Im b}},
$$
where $a,b\in\HH$. We have also 
$$
d_{\HH^n}(z,w)=\max_{j=1,\dots,n} d_{\HH}(z_j,w_j),
$$
where $z=(z_1,\dots,z_n), w=(w_1,\dots,w_n)\in\HH^n$.

As a simple corollary of the above formula we get.
\begin{lemma} Let $a,b\in\HH$ be such that $|a|<|b|$. Then
$$
\left| d_{\HH}(a,b)-\frac12\log\frac{|b|^2}{\Im a\Im b}\right|\le \frac{|a|}{|b|}.
$$
\end{lemma}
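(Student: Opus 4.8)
The plan is to pull everything back to the closed formula for $d_{\HH}$ just recalled, after which only an elementary inequality for the logarithm remains. First I would rewrite $\frac12\log\frac{|b|^2}{\Im a\,\Im b}=\log\frac{|b|}{\sqrt{\Im a\,\Im b}}$ and substitute $d_{\HH}(a,b)=\log\frac{|a-\bar b|+|a-b|}{2\sqrt{\Im a\,\Im b}}$; the factor $\sqrt{\Im a\,\Im b}$ cancels and the quantity under the absolute value becomes
$$
d_{\HH}(a,b)-\frac12\log\frac{|b|^2}{\Im a\,\Im b}=\log\frac{|a-\bar b|+|a-b|}{2|b|}.
$$
So it suffices to estimate $|\log L|$ with $L=\dfrac{|a-\bar b|+|a-b|}{2|b|}$.

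Next I would sandwich $L$ by the triangle inequality. Since $|\bar b|=|b|$ and $|a|<|b|$, each of $|a-b|$ and $|a-\bar b|$ lies in the interval $\bigl[\,|b|-|a|,\ |b|+|a|\,\bigr]$, with all three numbers strictly positive; averaging the two gives
$$
1-\frac{|a|}{|b|}\ \leq\ L\ \leq\ 1+\frac{|a|}{|b|}.
$$
Writing $t=|a|/|b|\in[0,1)$, it remains to bound $|\log L|$ for $L\in[1-t,1+t]$.

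On the right half this is immediate: $0\leq\log L\leq L-1\leq t$. On the left half the naive convexity bound $\log L\geq L-1\geq -t$ fails, and this is the only genuine point in the proof: one uses the sharper estimate $-\log(1-t)\leq \frac{t}{1-t}$, which holds because $h(t)=\frac{t}{1-t}+\log(1-t)$ satisfies $h(0)=0$ and $h'(t)=\frac{t}{(1-t)^2}\geq 0$. Hence $|\log L|\leq \frac{t}{1-t}=\frac{|a|}{|b|-|a|}$, which is exactly the asserted bound $\frac{|a|}{|b|}$ in the regime $|a|\ll|b|$ in which the lemma is applied (if $|a|$ is not much smaller than $|b|$ one should keep the sharper constant $\frac{|a|}{|b|-|a|}$). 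The step to watch is therefore the lower estimate of $\log L$ near the endpoint $1-|a|/|b|$; everything else is bookkeeping with the given formula and the triangle inequality.
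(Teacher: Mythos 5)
Your reduction is certainly the intended one (the paper offers no proof at all, calling the lemma ``a simple corollary of the above formula''), and your computation is correct as far as it goes: the difference equals $\log L$ with $L=\frac{|a-\bar b|+|a-b|}{2|b|}$, the triangle inequality gives $1-t\le L\le 1+t$ with $t=|a|/|b|$, and the bounds $\log(1+t)\le t$ and $-\log(1-t)\le \frac{t}{1-t}$ yield $|\log L|\le \frac{|a|}{|b|-|a|}$. The issue you flag at the end is not a defect of your argument but of the statement: the lemma as printed, with constant $\frac{|a|}{|b|}$, is \emph{false}, and the lower endpoint $L=1-t$ of your sandwich really is approached. Take $b=Re^{i\varepsilon}$ and $a=\tfrac12Re^{i\varepsilon}$ with $\varepsilon\to0^+$; then $|a-b|=\tfrac R2$, $|a-\bar b|=\tfrac R2\sqrt{1+8\sin^2\varepsilon}$, so
$$
d_{\HH}(a,b)-\frac12\log\frac{|b|^2}{\Im a\,\Im b}=\log\frac{1+\sqrt{1+8\sin^2\varepsilon}}{4}\longrightarrow\log\tfrac12\approx-0.693,
$$
whereas $\frac{|a|}{|b|}=\tfrac12$. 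So no proof can rescue the constant $1$; the correct statement is the one you proved, $\bigl|d_{\HH}(a,b)-\frac12\log\frac{|b|^2}{\Im a\Im b}\bigr|\le\frac{|a|}{|b|-|a|}$ (or any bound that is $O(|a|/|b|)$ under an extra hypothesis such as $2|a|\le|b|$).

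This weaker form is all that the paper actually uses: in the corollary the lemma is applied with $b=t_jw_k$ and $t_j\to\infty$, so the error term is $o(1)$ with either constant and the conclusion $d_{\HH}(z_k,t_jw_k)=\frac12\log t_j+o(1)$ survives unchanged. Two small points of presentation: you should state outright that the printed constant is wrong rather than hedge with ``in the regime $|a|\ll|b|$''; and $\frac{|a|}{|b|-|a|}$ is the \emph{weaker} (larger) constant, not the ``sharper'' one.
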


From this result it is easy follows.
\begin{corollary} Let $f:\HH^n\to\HH$ be a holomorphic mapping and let $w=(iw_1,\dots,iw_n)\in\HH^n$, $w_1,\dots,w_n>0$, be a fixed point. 
Assume that for a sequence $t_j\to\infty$ we have
$\lim_{j\to\infty}\frac{f(t_j w)}{t_j}=a$, where $a\in\HH$. 
Then for any $z\in\HH^n$ such that $\Im z=\Im w$ we have
$$
\Im f(z)\ge \frac{|a|^2}{\Im a}
\ge\Im a.
$$
\end{corollary}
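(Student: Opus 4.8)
The plan is to use the scaling invariance of the hyperbolic distance on $\HH^n$ together with the preceding lemma to transfer the asymptotic information about $f(t_jw)/t_j$ to information about $\Im f(z)$ for points $z$ with $\Im z=\Im w$. Fix such a $z$, say $z=(x_1+iw_1,\dots,x_n+iw_n)$. First I would observe that, since multiplication by $t_j>0$ is a holomorphic automorphism of $\HH$ (hence of $\HH^n$) and $f$ is holomorphic, the map $z\mapsto f(t_jz)/t_j$ is distance-decreasing from $(\HH^n,d_{\HH^n})$ to $(\HH,d_{\HH})$; equivalently $d_{\HH}\big(\tfrac1{t_j}f(t_jz),\tfrac1{t_j}f(t_jw)\big)\le d_{\HH^n}(t_jz,t_jw)=d_{\HH^n}(z,w)$, because $d_{\HH}(t_ja,t_jb)=d_{\HH}(a,b)$ for the one-variable coordinates. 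So along the sequence $t_j$ the points $\tfrac1{t_j}f(t_jz)$ stay within a fixed hyperbolic distance $D:=d_{\HH^n}(z,w)$ of $\tfrac1{t_j}f(t_jw)\to a$; in particular they converge (after passing to a subsequence if necessary) to some point $a'\in\HH$ with $d_{\HH}(a',a)\le D$, and $\Im f(t_jz)/t_j\to\Im a'$.

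Next I would extract the lower bound on $\Im a'$. The point of the hyperbolic geometry is that $d_{\HH}(a',a)$ being bounded does \emph{not} bound $\Im a'$ from below on its own, so one cannot conclude directly. Instead I would apply the lemma with $a\rightsquigarrow\tfrac1{t_j}f(t_jw)$ and $b\rightsquigarrow\tfrac1{t_j}f(t_jz)$ — here one must check the hypothesis $|\tfrac1{t_j}f(t_jw)|<|\tfrac1{t_j}f(t_jz)|$ holds eventually, which is where I expect a small technical wrinkle: if $|a'|<|a|$ one simply swaps the roles of the two points. Using $\Im\big(\tfrac1{t_j}f(t_jw)\big)\to\Im a$ and $\Im\big(\tfrac1{t_j}f(t_jz)\big)\to\Im a'$ and $|\cdot|\to|a|,|a'|$, the lemma gives in the limit
$$
\Big|\,D'-\tfrac12\log\frac{|a'|^2}{\Im a\,\Im a'}\,\Big|\le\frac{|a|}{|a'|},
$$
where $D'=\lim d_{\HH}(\tfrac1{t_j}f(t_jw),\tfrac1{t_j}f(t_jz))\le D<\infty$. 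Since $|a'|\ge\Im a'$ and the left side is bounded, this forces $\frac{|a'|^2}{\Im a\,\Im a'}$ to be bounded; more precisely it is bounded by a quantity depending only on $a$ and $z,w$, which is not yet the clean inequality claimed.

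To get exactly $\Im f(z)\ge|a|^2/\Im a$ I would instead not pass through an arbitrary $z$ with a large-distance estimate, but argue more sharply: the key extra input is that $\Im f$ is a \emph{positive harmonic function} on $\HH^n$, and on the slice $\{\Im\zeta=\Im w\}$ a positive harmonic function on $\HH$ in each variable attains its infimum behavior controlled by its boundary/asymptotic values. Concretely, I would combine the Harnack-type consequence of the lemma with the observation that $\Im\tfrac1{t_j}f(t_jw)\to\Im a$ while $\big|\tfrac1{t_j}f(t_jw)\big|\to|a|$, so that $\tfrac12\log\frac{|a|^2}{\Im a\,\Im(\tfrac1{t_j}f(t_jw))}\to 0$ up to the $o(1)$ error in the lemma; comparing with the same expression for $f(t_jz)/t_j$ and letting $j\to\infty$ yields $\Im a'\ge|a|^2/\Im a$ directly, and then $\Im f(z)\ge\Im a'$ follows by subharmonicity/scaling invariance of $\Im f$ along the orbit (the infimum of $\Im f(t_jz)/t_j$ over large $j$ does not exceed $\Im f(z)$ — indeed by the previous proposition-type normalization one reduces to the case where this is an equality). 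The second inequality $|a|^2/\Im a\ge\Im a$ is immediate from $|a|\ge\Im a$. The main obstacle is the bookkeeping in the first part: making rigorous that the limit $a'$ exists (a normal-families / subsequence argument), that the hypothesis of the lemma can be arranged by possibly exchanging the two points, and that the $o(1)$ error terms from the lemma genuinely vanish in the limit $j\to\infty$ rather than merely staying bounded.
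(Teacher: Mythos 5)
There is a genuine gap, and it comes from rescaling the wrong quantity. Your plan passes to the limit $a'=\lim_j f(t_jz)/t_j$ and then tries to (i) bound $\Im a'$ from below by $|a|^2/\Im a$ and (ii) transfer that bound back to $\Im f(z)$. Neither step goes through as written. For (i), once you rescale \emph{both} arguments by $t_j$, all you retain is $d_{\HH}(a',a)\le d_{\HH^n}(z,w)$, a bound that depends on $z$ and degenerates as $\Re z$ moves away from $\Re w$; you acknowledge this yourself, and the ``sharper'' second paragraph never actually produces the inequality $\Im a'\ge |a|^2/\Im a$ --- the sentence ``comparing with the same expression for $f(t_jz)/t_j$ \dots yields $\Im a'\ge|a|^2/\Im a$ directly'' is an assertion, not a derivation. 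For (ii), the claim $\Im f(z)\ge\lim_j\Im f(t_jz)/t_j$ does not follow from ``subharmonicity/scaling invariance along the orbit''; it is a nontrivial statement about positive pluriharmonic functions (essentially a Herglotz-type representation) which your proposal neither proves nor is entitled to assume. Since the corollary is about the actual value $\Im f(z)$ and not about any rescaled limit, the detour through $a'$ leaves the main claim unproved.

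The paper's proof avoids both problems by never rescaling $f(t_jz)$. One compares the distance from the \emph{fixed} point $f(z)$ to the \emph{escaping} point $t_ja$ with the distance from $z$ to $t_jw$: by the triangle inequality and the contraction property of $f$, $d_{\HH}(f(z),t_ja)\le d_{\HH^n}(z,t_jw)+d_{\HH}(f(t_jw),t_ja)$, and the last term equals $d_{\HH}(f(t_jw)/t_j,a)\to0$ by scaling invariance. The Lemma is then applied to the two remaining distances with the second point going to infinity: $d_{\HH^n}(z,t_jw)=\frac12\log t_j+o(1)$, where the constant term vanishes precisely because $\Im z=\Im w$ (each coordinate gives $\frac12\log\frac{t_j^2w_k^2}{\Im z_k\cdot t_jw_k}=\frac12\log t_j$), while $d_{\HH}(f(z),t_ja)=\frac12\log t_j+\frac12\log\frac{|a|^2}{\Im f(z)\Im a}+o(1)$. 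The divergent $\frac12\log t_j$ terms cancel, and the surviving constant must satisfy $\frac12\log\frac{|a|^2}{\Im f(z)\Im a}\le0$, which is exactly $\Im f(z)\ge \frac{|a|^2}{\Im a}\ge\Im a$. The information you need lives in the $O(1)$ term of two distances that both blow up like $\frac12\log t_j$; dividing by $t_j$ first destroys it.
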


\begin{proof} Fix a point $z\in\HH^n$ such that  $\Im z=\Im w$. Then
$$
d_{\HH}(f(z),t_ja)=d_{\HH}(f(z),f(t_j w))+d_{\HH}(f(t_j w), t_j a).
$$
Note that
$$
d_{\HH}(f(t_j w), t_j a)=d_{\HH}\Big(\frac{f(t_j w)}{t_j}, a\Big)\to0\quad\text{ when }j\to\infty.
$$
We have
$$
d_{\HH}(f(z),f(t_j w))\le d_{\HH^n}(z,t_j w)=\max_{k} d_{\HH}(z_k,t_j w_k)=\frac12\log t_j+o(1)
$$
and
$$
d_{\HH}(f(z),t_ja)=\frac12\log t_j+\frac12\log\frac{|a|^2}{\Im f(z)\Im a}+o(1).
$$
Taking $j\to\infty$ we get $\Im f(z)\ge \frac{|a|^2}{\Im a}\ge\Im a$.
\end{proof}

\begin{proof}[Proof of Theorem~\ref{thm:1}] Note that for a linear function we have $\Im h(z)=\Im h(i\Im z)$. 
From the above result we have
$$
\Im f(z)\ge\Im h(z),\quad\text{ for any } z\in\HH^n.
$$
Moreover, if $\Im(f(z_0)-h(z_0))\le0$ then a function $F(z)=e^{i(f(z)-h(z))}$ is such that $|F|\le 1$ and $|F(z_0)|=1$. So, from the maximum principle we get $F=e^{is}$ where $s\in\RR$, and, therefore, $f=h+c$, where $c\in\RR$.
\end{proof}

\begin{remark}
Let $f:\HH^n\to\HH$ be a holomorphic function and let $m>0$ be such that $f(mz)=m f(z)$ for any $z\in\HH^n$.
It is immediate, if $f_1, f_2: \HH^n\to\HH$ are holomorphic mappings such that $f_j(mz)=mf_j(z)$ then
$a_1f_1+a_2f_2:\HH^n\to\HH$, $a_1,a_2>0$, also has the same property.

Note that for any $\alpha_1,\dots,\alpha_n\in[0,1]$, $\sum_{j=1}^n\alpha_j=1$, the function
$$
g_{\alpha_1,\dots,\alpha_n}(z)=z_1^{\alpha_1}\dots z_n^{\alpha_n}:\HH^n\to\HH
$$
has the property $g_{\alpha_1,\dots,\alpha_n}(m z)=mg_{\alpha_1,\dots,\alpha_n}(z)$. So, for $n\ge2$ there exist a big family of functions which satisfy the property $f(mz)=m f(z)$.
 \end{remark}

\end{document}